\documentclass[a4paper,11pt,reqno]{amsart}
\usepackage[utf8]{inputenc}
\usepackage{amsmath}
\usepackage{amsfonts}
\usepackage{amssymb}
\usepackage{graphicx}
\usepackage{times}
\usepackage{color}
\usepackage{tikz}
\usepackage{tikz-3dplot}
\usepackage{enumerate}
\usepackage{url}
\usepackage{ifthen}

\usepackage{fancybox}
\usepackage[framemethod=TikZ]{mdframed} 
\usepackage[left=2cm,right=2cm,top=2.0cm,bottom=2cm]{geometry}
\usepackage{eso-pic}
\usepackage{multirow}
\usepackage{multicol}

\usepackage{indentfirst}
\usepackage[T1]{fontenc}

\newtheorem{theorem}{Theorem}[section]
\newtheorem{definition}[theorem]{Definition}

\newtheorem{corollary}[theorem]{Corollary}
\newtheorem{lemma}[theorem]{Lemma}

\def\rad{\mathop{\rm rad}}
\def\ord{\mathop{\rm ord}}
\def\F{\mathbb F}
\def\R{\mathbb R}

\def\N{\mathbb N}
\def\Q{\mathbb Q}

\numberwithin{equation}{section}

%%%%%%%%%%%%%% fim do preâmbulo %%%%%%%%%%%%%%%%%%

\author[F.E. Brochero Mart\'{\i}nez]{F. E. Brochero Mart\'{\i}nez}
\author[Lays Silva-Jesus]{Lays Silva-Jesus} 
\address{
Departamento de Matem\'{a}tica\\
Universidade Federal de Minas Gerais\\
UFMG\\
Belo Horizonte, MG\\
 30123-970\\
 Brazil\\
 }
 %}
 \email{fbrocher@mat.ufmg.br }\email{laysgrazielle@gmail.com}

\title{The estimation of the number of Irreducible Binomials}
\keywords{Irreducible binomial in a Finite Field, Irreducible Factors, Cyclotomic Polynomials}

\date{\today}

\subjclass[2000]{ }

\subjclass[2010]{12E20 (primary) and 11T30(secondary)}

\begin{document}
\maketitle
\begin{abstract}
Let $\mathbb{F}_q$ be the finite field with $q$ elements, and $T$ a positive  integer.  In  this article we find a sharp estimative of  the total number of monic irreducible binomials  in $\F_q[x]$ of degree less or equal to $T$,  when $T$ is large enough.   
\end{abstract}

\section{Introduction}
Let $\F_q$ be a finite field with $q$ elements. 
Determining an asymptotic formula for the number of monic irreducible polynomial in $\F_q[x]$ of degree $t$ that satisfied some condition is an interesting question.  For instance,  in \cite{Coh} and  their references  can found a several results about asymptotic formulas of the number of  monic irreducible polynomials with some fixed coefficients,  and  in  \cite{Sti} found an asymptotic  formula for the number of irreducible polynomial invariant by some action of $PSL(2,\F_q)$.
In this direction, a natural question is finding a asymptotic function of the number of irreducible polynomial with a few nonzero coefficients, and in particular, when the polynomials  have only two nonzero coefficients.  
We point out that in that case, it is known an easy test to determine if that kind of polynomial is irreducible. In fact, 
% For each $a\in \F_q$ and $t\in \N$,  it is known a criteria  in order to determine  if the  polynomial  $x^t-a$ is irreducible over $\F_q$. This
the criteria is given by the following lemma.
\begin{lemma}\label{binomios} \cite[Theorem~3.75]{LiNi} Let $t\ge  2$ be an integer and $a \in \F_q^*$ . Then the binomial $x^t - a$ is irreducible in $\F_q [x]$ if and only if the following three conditions are satisfied:
\begin{enumerate}
\item Every prime divisor $p$ of $t$  divides $\ord_q a$.
\item $\gcd(t, \frac {q-1}{\ord_q a})=1$
\item  If  $4$ divides $t$  then $q \equiv 1 \pmod 4$.
\end{enumerate}
\end{lemma}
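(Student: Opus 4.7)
The plan is to deduce this criterion from the classical Serret--Capelli theorem for binomial irreducibility over an arbitrary field and then translate its hypotheses to finite fields using the cyclic structure of $\F_q^*$. Serret--Capelli states that for a field $F$ and $a \in F^*$, the polynomial $x^t-a$ is irreducible in $F[x]$ if and only if (a) $a$ is not a $p$-th power in $F$ for every prime $p\mid t$, and (b) if $4\mid t$, then $-4a$ is not a fourth power in $F$. I would prove this by inductively reducing to the case where $t$ is a prime power and then handling that case directly; the $2$-power sub-case requires extra care because $F$ may lack the necessary roots of unity.

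To translate condition (a) to $\F_q$, set $r=\ord_q a$ and $m=(q-1)/r$. Since $\F_q^*$ is cyclic of order $q-1$, the subgroup of $p$-th powers has index $\gcd(p,q-1)$, and $a$ lies in it iff $\gcd(p,q-1)\mid m$. A short case distinction (on whether $p\mid q-1$) shows that this is equivalent to ``$p\nmid r$ or $p\mid m$''; negating and imposing the condition for every prime $p\mid t$ recovers precisely conditions (1) and (2) of the lemma.

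For condition (b), I split on $q\bmod 4$. If $q$ is even, then $q-1$ is odd, forcing $\ord_q a$ to be odd, so condition (1) already excludes $2\mid t$ and (3) is vacuously consistent. If $q\equiv 1\pmod 4$, then $\F_q$ contains $i$ with $i^2=-1$, and the identity $-4=(1+i)^4$ reduces ``$-4a$ is a fourth power'' to ``$a$ is a fourth power''; but conditions (1) and (2) for $p=2$, together with $v_2(q-1)\ge 2$, force $4\mid r$ and $4\nmid m$, so $a$ is not a fourth power and (b) holds automatically. If $q\equiv 3\pmod 4$, then $\gcd(4,q-1)=2$, hence fourth powers coincide with squares; an exponent count in a fixed generator of $\F_q^*$ shows that $-4a$ \emph{is} a square whenever (1) and (2) hold, so (b) fails and $x^t-a$ is reducible, matching the necessity of condition (3). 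The principal obstacle is the Serret--Capelli input itself, especially the $2$-power case, after which the translation is a straightforward unwinding of the cyclic structure of $\F_q^*$.
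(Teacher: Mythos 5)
Your proposal is correct, but note that the paper offers no proof of this lemma at all: it is quoted verbatim from Lidl--Niederreiter, whose own argument works directly inside the finite field by analyzing the multiplicative order of a root $\beta$ of $x^t-a$ (using $\beta^t=a$ to relate $\ord \beta$ to $t$ and $e=\ord_q a$, and the fact that the degree of $\beta$ over $\F_q$ is the order of $q$ modulo $\ord\beta$). Your route instead black-boxes the general Vahlen--Serret--Capelli criterion and reduces everything to a group-theoretic translation in the cyclic group $\F_q^*$; that translation is carried out correctly. Writing $r=\ord_q a$, $m=(q-1)/r$, your equivalence ``$a$ is a $p$-th power iff $\gcd(p,q-1)\mid m$'' and the case split on $p\mid q-1$ do recover (1) and (2) exactly (the point being that $p\mid q-1$ together with $p\nmid m$ forces $p\mid r$), and your treatment of (b) is sound: for $q\equiv 1\pmod 4$ the identity $-4=(1+i)^4$ plus $2\nmid m$ shows $-4a$ is not a fourth power, while for $q\equiv 3\pmod 4$ fourth powers coincide with squares and $-4a=(-1)\cdot 4\cdot a$ is a square because $-1$ and $a$ are both non-squares, so reducibility follows, matching condition (3); the characteristic-$2$ case is vacuous since $r$ is odd. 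What your approach buys is a clean separation between a field-theoretic criterion valid over any field and an elementary unwinding of cyclicity; what it costs is that the genuinely hard content (Capelli's theorem, in particular the $4\mid t$ case resting on the factorization $x^4+4b^4=(x^2+2bx+2b^2)(x^2-2bx+2b^2)$) is only sketched, so as written your argument is complete only modulo citing that classical theorem, which is an acceptable substitute for the paper's own citation of Lidl--Niederreiter but should be stated as such rather than ``proved by induction'' in one line.
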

Thus, a natural question  is, by fixing $q$, what is the average of the number of binomials? i.e. 
If $t$ is positive integer and $N_q(t)$ denotes the number of $a\in \F_q$ such that the binomial $x^t-a$ is irreducible over $\F_q[x]$,  then, what is the behavior  of the sum $\sum_{t\le T} N_q(t)$  for $T\in \N$ large enough?
 
From Lemma \ref{binomios}, it is easy to determine  the function $N_q(t)$, specifically:
\begin{lemma}[ {\cite[Corollary 1 (b)]{BrGiOl}}  or {\cite[Lemma 7]{HeSh}}]\label{Br} Let $\F_q$ be a field with $q$ elements and $N_q(t)$ the number of monic irreducible binomials of degree $t$ in $\F_q$. Then
$$N_q(t)= \begin{cases}
\frac {\varphi(t)}t (q-1), & \text{if } \rad_4(t)|(q-1)\\
0, &\text{otherwise}
\end{cases}
$$
where $rad(t)$ is the product of the primes $p$ that divides $t$, $rad_4(t)=\begin{cases} \rad(t) & \text{if $4\nmid t$}\\ 2\rad (t) &\text{otherwise} 
\end{cases}$ and  $\varphi$ is the Euler Totient function.
\end{lemma}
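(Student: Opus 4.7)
The plan is to apply Lemma \ref{binomios} term by term, grouping $a \in \F_q^*$ by its multiplicative order $d = \ord_q(a)$. Since $\F_q^*$ is cyclic, there are exactly $\varphi(d)$ elements of each order $d \mid q-1$, and conditions (1) and (2) of Lemma \ref{binomios} depend on $a$ only through $d$, while condition (3) is a purely ambient condition on $(q,t)$. So, writing $\mathcal D$ for the set of $d \mid q-1$ satisfying (1) and (2), I would get $N_q(t) = \sum_{d \in \mathcal D} \varphi(d)$ when condition (3) holds, and $N_q(t) = 0$ otherwise.

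The key step is to rewrite conditions (1) and (2) prime by prime. For each prime $p \mid t$, condition (1) asks $v_p(d) \geq 1$ and condition (2) asks $v_p((q-1)/d) = 0$, so both together become $v_p(d) = v_p(q-1)$; for primes $p \nmid t$ there is no restriction. In particular $\mathcal D$ is nonempty if and only if $\rad(t) \mid q-1$. Combining this with condition (3) one checks that the joint hypothesis ``$\mathcal D \neq \emptyset$ \emph{and} (3) holds'' is equivalent to $\rad_4(t) \mid q-1$, which gives the ``$0$ otherwise'' clause. The only case where the two conditions differ is $4 \mid t$, where (3) forces $4 \mid q-1$, i.e.\ $v_2(\rad_4(t)) = 2 \le v_2(q-1)$.

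Assuming $\rad_4(t) \mid q-1$, I would factor $q-1 = m n$ with $m = \prod_{p \mid t} p^{v_p(q-1)}$ and $\gcd(n, t) = 1$. The prime-by-prime description shows $\mathcal D = \{\, m d' : d' \mid n \,\}$, and since $\gcd(m, d') = 1$,
\[
N_q(t) \;=\; \sum_{d' \mid n} \varphi(m)\varphi(d') \;=\; \varphi(m) \cdot n \;=\; (q-1) \cdot \frac{\varphi(m)}{m} \;=\; (q-1) \prod_{p \mid t}\!\left(1 - \frac{1}{p}\right) \;=\; \frac{\varphi(t)}{t}(q-1),
\]
using the standard identities $\sum_{d' \mid n}\varphi(d') = n$ and $\varphi(m)/m = \prod_{p \mid t}(1-1/p) = \varphi(t)/t$. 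The only bookkeeping subtlety, and the piece most likely to cause a slip, is the interaction of condition (3) with the prime $p=2$: it is exactly what promotes the correct radical from $\rad(t)$ to $\rad_4(t)$, and must be tracked carefully when $4 \mid t$.
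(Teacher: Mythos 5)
Your argument is correct. The paper itself offers no proof of this lemma --- it is quoted from \cite[Corollary 1 (b)]{BrGiOl} and \cite[Lemma 7]{HeSh} with only the remark that it follows easily from Lemma \ref{binomios} --- and your derivation is exactly the intended route: group $a\in\F_q^*$ by order $d$, note that conditions (1) and (2) of Lemma \ref{binomios} amount to $v_p(d)=v_p(q-1)\ge 1$ for every prime $p\mid t$, fold condition (3) into the passage from $\rad(t)$ to $\rad_4(t)$, and sum $\varphi(md')$ over $d'\mid n$ to get $\varphi(m)n=\frac{\varphi(t)}{t}(q-1)$. The only cosmetic caveat is that Lemma \ref{binomios} is stated for $t\ge 2$, so the degree-one case should be checked directly (trivially, all $q-1$ binomials $x-a$ with $a\ne 0$ are irreducible, matching the formula).
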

%An alternative proof of this Lemma can be find in \cite{HeSh} Lemma 7. 
 
The sum $\sum_{t\le T} N_q(t)$  was studied  in \cite{HeSh} by  Heyman and Shparlinski, using advance results of analytic number theory. They  estimated the total number of irreducible binomials on average over $q$ or $t$. In particular they calculated a higher bound of the  average the number of irreducible binomial when $q$ is fixed and $t\le T$, where $T$ is large enough. One result of that article is the following:
 \begin{theorem} \cite[Theorem~3.]{HeSh}
For any fixed positive $A$ and $\epsilon$ and a sufficiently large real $q$ and $T$ with
$$T\ge  (\log(q - 1))^{ (1+\epsilon)A \log_3 q/ \log_4 q}$$
we have
$$\sum_{t\le T} N_ q (t) \le  (q - 1)\frac {T}{ (\log T )^ A} .$$
\end{theorem}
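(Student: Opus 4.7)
The strategy is to reduce to counting integers $t\le T$ whose prime factors all divide $q-1$, and then bound this count via Rankin's method together with classical smooth-number estimates.

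By Lemma~\ref{Br} and the trivial inequality $\varphi(t)/t\le 1$,
\[
\sum_{t\le T}N_q(t)\le (q-1)\cdot M(q,T),\qquad M(q,T):=\#\bigl\{t\le T:\rad(t)\mid q-1\bigr\}.
\]
It therefore suffices to prove $M(q,T)\le T/(\log T)^{A}$. Let $P$ denote the set of primes dividing $q-1$; then $M(q,T)$ is the number of $P$-smooth positive integers up to $T$, and Rankin's trick gives, for any $s\in(0,1)$,
\[
M(q,T)\le T^{s}\prod_{p\in P}(1-p^{-s})^{-1}.
\]

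Set $y:=\log q$ and split the Rankin product at $y$. The primes $p\in P$ with $p>y$ are few: their product divides $q-1$, so there are at most $\log(q-1)/\log y\le\log q/\log_2 q$ of them, and their joint contribution to the product is bounded by $\exp\bigl(O((\log q)^{1-s}/\log_2 q)\bigr)$, which reduces to $1+o(1)$ provided $1-s\le\log_4 q/\log_2 q$. The small-prime factor is majorised by $\prod_{p\le y}(1-p^{-s})^{-1}$, and the resulting expression $T^{s}\prod_{p\le y}(1-p^{-s})^{-1}$ is precisely Rankin's upper bound for $\Psi(T,y)$, the count of $y$-smooth integers up to $T$. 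Minimising over $s$ and invoking the classical de~Bruijn/Hildebrand--Tenenbaum estimate yields, with $u:=\log T/\log y$,
\[
M(q,T)\le T\exp\bigl(-u\log u\,(1+o(1))\bigr).
\]

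To verify the required inequality $u\log u\ge A\log_2 T\,(1+o(1))$, observe that under the hypothesis $u=\log T/\log_2 q\ge(1+\epsilon)A\log_3 q/\log_4 q$, whence $\log u=\log_4 q\,(1+o(1))$ and
\[
u\log u\ge (1+\epsilon)A\log_3 q\,(1+o(1)).
\]
At the critical value of $T$ (equality in the hypothesis), a direct calculation gives $\log_2 T=\log_3 q\,(1+o(1))$, so $A\log_2 T=A\log_3 q\,(1+o(1))$; for larger $T$ the product $u\log u$ grows faster than $\log_2 T$, so the inequality holds with even more room. The factor $(1+\epsilon)$ absorbs the $(1+o(1))$ errors, giving $\log M(q,T)\le \log T-A\log_2 T$ for $q$ sufficiently large.

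The principal obstacle is the uniform treatment of the prime divisors of $q-1$, which may individually be as large as $q-1$ itself. The observation that large prime divisors are scarce---their count is at most $\log q/\log_2 q$---allows their Rankin contribution to be absorbed into a $1+o(1)$ factor, reducing the problem to the classical study of $y$-smooth numbers with $y=\log q$. Extracting the sharp $u\log u$ exponent from the Rankin optimisation, where the $(1+\epsilon)$ slack in the hypothesis is essential to absorb the errors inherent in smooth-number estimates, is the other delicate step.
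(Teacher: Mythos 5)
This statement is quoted from Heyman and Shparlinski \cite{HeSh}; the present paper contains no proof of it, so your argument can only be assessed on its own merits rather than against an internal proof. Your reduction to $M(q,T)=\#\{t\le T:\rad(t)\mid q-1\}$, the Rankin inequality $M(q,T)\le T^{s}\prod_{p\mid q-1}(1-p^{-s})^{-1}$, the splitting of the prime divisors of $q-1$ at $y=\log q$ (using that there are at most $\log q/\log_2 q$ of them above $y$), and the computation at the critical value of $T$ are all sound.

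The genuine gap is in the step ``minimising over $s$ and invoking the classical de Bruijn/Hildebrand--Tenenbaum estimate yields $M(q,T)\le T\exp(-u\log u\,(1+o(1)))$'', together with the closing remark that for larger $T$ the inequality ``holds with even more room''. Two problems. First, you are not free to minimise over $s$: the constraint $1-s\le\log_4 q/\log_2 q$, which you imposed so that the large-prime factor is $1+o(1)$, is met by the optimal $s$ only near the critical $T$; for substantially larger $T$ the unconstrained minimiser has $1-s$ of order $\log_2 T/\log_2 q$, and then the large-prime factor is no longer negligible. Second, the smooth-number estimate you invoke holds only when $u$ is small compared with $y$ (the de Bruijn/Canfield--Erd\H{o}s--Pomerance range requires roughly $y\ge(\log T)^{1+\delta}$, i.e.\ $\log T\le(\log q)^{1/(1+\delta)}$), whereas the theorem allows $T$ arbitrarily large relative to $q$; in that regime the asserted bound is actually false: for $\log T\ge(\log q)^{2}$ one has $u\log u\ge(2-o(1))\log T$, so the right-hand side drops below $1$ while $M(q,T)\ge 1$. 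The statement is still true there, but your argument does not reach it. The repair is a range splitting: for $T$ between the threshold and, say, $\exp\bigl(\sqrt{\log q}\bigr)$, the majorisation $M(q,T)\le\Psi(T,\log q)$ and the smooth-number bound are legitimate, and your critical-point computation (with $\epsilon$ absorbing the $o(1)$'s and the difference between $\log_2 T$ and $\log_3 q$) finishes the job; for $T\ge\exp\bigl(\sqrt{\log q}\bigr)$ a crude Rankin bound with a fixed exponent, e.g.\ $s=\tfrac12$, gives $M(q,T)\le T^{1/2}\exp\bigl(O\bigl(\sqrt{\log q}/\log_2 q\bigr)\bigr)\le T/(\log T)^{A}$, since $\sum_{p\mid q-1}p^{-1/2}\ll\sqrt{\log q}/\log_2 q$. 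With that splitting (and deleting the word ``minimising''), your argument becomes complete.
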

We will see that this upper bound is not close to that the  actual value.  In fact, in this article, using elementary tools, we give the exact order of growth for the average order of the number of irreducible binomials of degree $ t $ on the field $\mathbb {F}_ q$, that which improves the result found by  Heyman and Shparlinski.  Specifically we prove the following result:

\begin{theorem}\label{principal}
Let $\F_q$ be a finite fields with $q$ elements and $N_q(t)$ the number of monic irreducible binomials of degree  $t$ in $\F_q[x]$. 
\begin{enumerate}
\item 
If $q\not\equiv 3 \pmod 4$ and $q-1=p_1^{\alpha_1}\dots p_s^{\alpha_s}$ is the factorization of $q-1$ in prime factors,  then %for each $T$ large enough we have that
$$\sum_{1\le t\le T} N_q(t)= \frac{\varphi (q-1)}{s!\log p_1\cdots \log p_s}\left( (\log T)^s+\frac s2 \sum_{j=1}^s \frac{(p_j+1)\log p_j}{p_j-1} (\log T)^{s-1}\right) +o\left(\frac{(\log T)^{s-1}}{\log\log T}\right).$$
\item If $q\equiv 3 \pmod 4$ and $q-1=2p_1^{\alpha_1}\dots p_s^{\alpha_s}$% is the factorization of $\fraq-1$ in prime factors
,  then %for each $T$ large enough we have that
$$\sum_{1\le t\le T} N_q(t)= \frac{3\varphi (q-1)}{2s!\log p_1\cdots \log p_s}\left( (\log T)^s+\frac s2\left[ \sum_{j=1}^s \frac{(p_j+1)\log p_j}{p_j-1} -\frac {\log 4}3\right](\log T)^{s-1}\right) +o\left(\frac{(\log T)^{s-1}}{\log\log T}\right).$$
\end{enumerate}

\end{theorem}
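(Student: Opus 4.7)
By Lemma~\ref{Br} we have $N_q(t)=(q-1)\varphi(t)/t$ when $\rad_4(t)\mid(q-1)$ and $N_q(t)=0$ otherwise, so the task reduces to estimating a weighted sum over admissible $t\le T$. When $q\not\equiv 3\pmod 4$ the condition $\rad_4(t)\mid(q-1)$ amounts to requiring every prime factor of $t$ to lie in $\{p_1,\dots,p_s\}$, so $t$ ranges over the monoid $G=\{p_1^{k_1}\cdots p_s^{k_s}:k_j\ge 0\}$. When $q\equiv 3\pmod 4$ one has $4\nmid(q-1)$, so admissible $t$ take the form $2^{\epsilon}m$ with $\epsilon\in\{0,1\}$ and $m$ in the monoid $G$ built on the $s$ odd prime divisors of $q-1$; since $\varphi(2m)/(2m)=\tfrac12\varphi(m)/m$ for odd $m$, this case reduces to the first via $S(T)=S_{\mathrm{odd}}(T)+\tfrac12 S_{\mathrm{odd}}(T/2)$, with $S_{\mathrm{odd}}(T)=\sum_{m\in G,\,m\le T}\varphi(m)/m$.

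To estimate $S_{\mathrm{odd}}(T)$ I would analyze the Dirichlet series
$$F(w)=\sum_{t\in G}\frac{\varphi(t)/t}{t^w}=\prod_{j=1}^s\frac{1-p_j^{-w-1}}{1-p_j^{-w}},$$
which has a pole of order $s$ at $w=0$. Expanding each Euler factor via $1-p_j^{-w}=w\log p_j\bigl(1-\tfrac{w\log p_j}{2}+O(w^2)\bigr)$ and $1-p_j^{-w-1}=(1-1/p_j)\bigl(1+\tfrac{w\log p_j}{p_j-1}+O(w^2)\bigr)$ and multiplying yields
$$F(w)=\frac{\prod_j(1-1/p_j)}{w^s\prod_j\log p_j}\left[1+\frac{w}{2}\sum_{j=1}^s\frac{(p_j+1)\log p_j}{p_j-1}+O(w^2)\right].$$
By Perron's formula, $S_{\mathrm{odd}}(T)=\operatorname{Res}_{w=0}F(w)T^w/w$ plus a lower-order error; extracting the coefficient of $w^{-1}$ in $F(w)e^{w\log T}/w$ produces a polynomial in $\log T$ of degree $s$ with the precise top two coefficients demanded by the theorem. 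Multiplying by $(q-1)$ and applying $\prod_j(1-1/p_j)=\varphi(q-1)/(q-1)$ in case (1) gives item (1). For item (2), I would expand $(\log T-\log 2)^s$ and $(\log T-\log 2)^{s-1}$, assemble $S_{\mathrm{odd}}(T)+\tfrac12 S_{\mathrm{odd}}(T/2)$, and collect terms; the arithmetic combination $1+\tfrac12=\tfrac32$ produces the prefactor, and the $-s(\log T)^{s-1}\log 2$ from the shift by $\log 2$ produces the additional correction $-\tfrac{\log 4}{3}$ in the subleading bracket.

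The main technical hurdle is the error analysis: $F(w)$ has infinitely many singularities on the imaginary axis at $w=2\pi ik/\log p_j$ ($k\ne 0$), so a literal contour shift would leave oscillatory contributions. A clean elementary workaround, presumably closer to the paper's intent, is to count lattice points directly. Stratifying by the support $J=\{j:k_j\ge 1\}$ gives
$$\sum_{t\in G,\,t\le T}\frac{\varphi(t)}{t}=\sum_{J\subseteq\{1,\ldots,s\}}\prod_{j\in J}\Bigl(1-\tfrac{1}{p_j}\Bigr)\,N_J(T),$$
where $N_J(T)=\#\{(k_j)_{j\in J}:k_j\ge 1,\,\sum_{j\in J}k_j\log p_j\le\log T\}$. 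A two-term asymptotic for lattice points in the simplex $\{\mathbf{k}\ge 0:\sum c_jk_j\le X\}$, giving leading term $X^r/(r!\prod c_j)$ plus the half-boundary correction $X^{r-1}\sum c_j/(2(r-1)!\prod c_j)$, applied to the contributions $|J|=s$ and $|J|=s-1$, yields the same expansion with remainder $O((\log T)^{s-2})=o((\log T)^{s-1}/\log\log T)$.
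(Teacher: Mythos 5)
Your final plan is essentially the paper's own proof: stratifying the admissible $t=p_1^{k_1}\cdots p_s^{k_s}\le T$ by the support $J$ is exactly the paper's partition of $\Upsilon(T)$ into $\Upsilon^+(T)$, the sets $\Upsilon_j(T)$ and $\Upsilon_0(T)$, and the ``two-term asymptotic for lattice points in the simplex'' you invoke is precisely Spencer's theorem (Theorem \ref{Spencer}), which the paper feeds in through Lemma \ref{lem3.1}; the Dirichlet-series/Perron route you sketch first is, as you yourself observe, blocked by the poles on the imaginary axis, and you discard it. One repair is needed in the error analysis: a two-term simplex count with remainder $O(X^{r-2})$ is not available and is not what Spencer (or Beukers) proves. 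For the full stratum $|J|=s$ you must quote the asymptotic with error $o(X^{s-1}/\log X)$, which is legitimate here only because $\log p_1,\dots,\log p_s$ are linearly independent over $\Q$ (distinct primes); that hypothesis should be stated, and the remainder you carry is then $o((\log T)^{s-1}/\log\log T)$ --- exactly the error term of the theorem --- not $O((\log T)^{s-2})$. For the strata with $|J|\le s-1$ the crude bounds of Theorem \ref{Leh_theo} already give what you need.

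The second, more serious point is case (2), where you assert that assembling $S_{\mathrm{odd}}(T)+\tfrac12 S_{\mathrm{odd}}(T/2)$ ``produces the prefactor'' without carrying the constant through. The identity $\prod_{j}(1-1/p_j)=\varphi(q-1)/(q-1)$, which you correctly restrict to case (1), fails when $q\equiv 3\pmod 4$: there $\prod_{j=1}^s(1-1/p_j)=2\varphi(q-1)/(q-1)$, since $2$ is not among the $p_j$. Executing your plan literally therefore yields leading constant $\frac{3\varphi(q-1)}{s!\,\log p_1\cdots\log p_s}$ (the bracket correction $-\frac{\log 4}{3}$ does come out as you claim), which is twice the constant $\frac{3\varphi(q-1)}{2\,s!\,\log p_1\cdots\log p_s}$ displayed in the statement. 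A direct check with $q=7$ ($s=1$, $p_1=3$), where the admissible degrees are $3^a$ and $2\cdot 3^a$ and the partial sums grow like $6\log T/\log 3=3\varphi(6)\log T/\log 3$, supports the larger constant, so the discrepancy lies in the displayed formula rather than in your reduction; but you cannot simply assert agreement --- you must do the case-(2) bookkeeping explicitly and reconcile the factor of $2$.
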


In particular,  as a direct consequence we get the following corollary.

 \begin{corollary}
Let $q$ be a power of a prime such that $q-1=\begin{cases} p_1^{\alpha_1}\cdots p_s^{\alpha_s} &\text{if $q\not\equiv 3\pmod 4$}\\
2p_1^{\alpha_1}\cdots p_s^{\alpha_s} &\text{if $q\equiv 3\pmod 4$}
\end{cases}
$. Then
$$ \frac {s!\log p_1\cdots \log p_s}{\varphi(q-1)}\cdot \lim_{T\to \infty}\frac {\sum_{1\le t\le T} N_q(t)}{(\log T)^s}=\begin{cases} 1 &\text{if $q\not\equiv 3\pmod 4$}\\
\frac 32 &\text{otherwise.}
\end{cases}$$
\end{corollary}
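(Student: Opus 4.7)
The plan is to derive the corollary as a direct consequence of the two-term asymptotic expansion proved in Theorem \ref{principal}, since that theorem already packages all of the analytic content. The only task that remains is to isolate the leading coefficient.

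In case (1), where $q\not\equiv 3\pmod 4$, I would divide both sides of the asymptotic identity for $\sum_{1\le t\le T}N_q(t)$ by $(\log T)^s$. The leading contribution reduces to the constant $\frac{\varphi(q-1)}{s!\log p_1\cdots \log p_s}$, independent of $T$. The subleading term $\frac{s}{2}\sum_{j=1}^{s}\frac{(p_j+1)\log p_j}{p_j-1}(\log T)^{s-1}$ divided by $(\log T)^s$ is of order $1/\log T$ and tends to $0$, while the error $o\bigl((\log T)^{s-1}/\log\log T\bigr)$ divided by $(\log T)^s$ becomes $o\bigl(1/(\log T\cdot \log\log T)\bigr)$ and also vanishes. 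Passing to the limit and multiplying by the normalizing factor $\frac{s!\log p_1\cdots \log p_s}{\varphi(q-1)}$ yields the value $1$.

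Case (2), where $q\equiv 3\pmod 4$, is handled identically. The only difference is that the leading coefficient in Theorem \ref{principal} carries an additional factor of $3/2$; after the same division, limiting procedure, and normalization, the limit equals $3/2$.

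There is no substantive obstacle here: all of the effort has already been invested in the proof of Theorem \ref{principal}, and the corollary is simply the remark that the normalized quotient $\sum_{t\le T}N_q(t)/(\log T)^s$ converges to its leading Cesàro-type coefficient, the two cases $q\not\equiv 3\pmod 4$ and $q\equiv 3\pmod 4$ accounting for the dichotomy $1$ versus $3/2$.
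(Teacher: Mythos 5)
Your proposal is correct and matches the paper exactly: the paper presents this corollary as a direct consequence of Theorem \ref{principal}, obtained precisely by dividing the asymptotic expansion by $(\log T)^s$ and noting that the subleading and error terms vanish in the limit. Nothing further is needed.
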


\section{Notation and some useful result about lattice points}

Throughout this paper, $\N_0$ and $\N$ denote respectively  the set of non negative integer and the set of positive integers, $\F_q$ denotes a finite field with $q$ elements and  $q-1=\begin{cases}p_1^{\alpha_1}\cdots p_s^{\alpha_s}&\text {if $q\not\equiv 3\pmod 4$}\\
2p_1^{\alpha_1}\cdots p_s^{\alpha_s}&\text {otherwise,}
\end{cases} $
where $p_1,\dots,p_s$ are different primes.  For each positive integer $n$, $\varphi(n)$ denotes the Euler totient functions evaluated at $n$,  $N_q(n)$ denotes the number of monic irreducible binomial in $\F_q[x]$, $\rad(n)$ is the product of prime divisor of $n$ and $\rad_4(n)=\begin{cases} \rad(n) &\text{if $4\nmid n$}\\ 2\rad(n) &\text{otherwise}\end{cases}$ . 
For each $\vec v=(v_1,\dots,v_s)\in (\N_0)^s$, we denote by $t(\vec v)$ the number $p_1^{v_1}\cdots p_s^{v_s}$.   Finally, $l_j$ denotes the number $\log p_j$ for $j=1,\dots, s$. 

In what follows, we present some result about the number of lattice points in the $s$-dimensional tetrahedron  bound by the coordinates hyperplanes and an appropriate plane.    These estimate  will be  use to determine how many positive integers $t$ satisfied $t\le T$ and $\rad(t)|(q-1)$. Specifically,  we want to know how many integer numbers of the form $p_1^{v_1}\cdots p_s^{v_s}$ are less or equal to $T$.   For this purpose, we need some definitions and results, which we will present below.

\begin{definition} Let $a_1,\dots, a_s$ and $\lambda$ be positive real numbers. Let define
$\Omega(\lambda; a_1,\dots, a_s)$ as the close tetrahedron  limited by the coordinates hyperplanes and the hyperplane   $a_1x_1+\cdots +a_sx_s= \lambda$, i.e. 
$$\Omega(\lambda; a_1,\dots, a_s)=\{ (x_1,\dots, x_s)\in \R^s|    x_i\ge 0 \text{ and } a_1x_1+\cdots +a_sx_s\le \lambda\},$$
and $\mathcal N(\lambda; a_1,\dots, a_s)$ the number of points of  the set $\N_0^s\cap \Omega(\lambda; a_1,\dots, a_s)$, i.e.
$$\mathcal N_s(\lambda; a_1,\dots, a_s)=|\{ (x_1,\dots, x_s)\in \N_0^s|    a_1x_1+\cdots +a_sx_s\le \lambda\}|.$$
\end{definition}

%For do that,  it is necessary to count  the lattice points inside or at the border of a $s$-dimensional tetrahedron  which is bounded by the hyperplanes $ x_1 = 0, x_2 = 0, \dots, x_ {s} = 0 $ and the hyperplane, $$\omega_{1}x_{1}+\omega_{2}x_{2}+\cdots+x_{s}\omega_{s}\le \lambda,$$ 
%where $ \omega_ {i}$ are positive real numbers  and $ \lambda $ is a non-negative parameter. 
A naive estimation  of the number of lattice points in the tetrahedron is a classical result:  If we consider, for each element of $\Omega(\lambda; a_1,\dots, a_s)$ with integer coordinates, a hypercube of size 1 located in the positive direction respect that point, then the solid $\mathcal C_s$ obtained from the union these hypercubes contains $\Omega(\lambda; a_1,\dots, a_s)$.  Thus 
\begin{equation}\label{trivialdown}
\mathcal N_s(\lambda; a_1,\dots, a_s)=Vol(\mathcal C_s)>Vol(\Omega(\lambda; a_1,\dots, a_s))=  \frac 1{s!} \prod_{j=1}^s \frac {\lambda}{a_j}=\frac {\lambda^s}{s! a_1\cdots a_s}.
\end{equation}
The same way,  $\mathcal C_s$ is contained in  $\Omega(\lambda+a_1+\cdots+a_s; a_1,\dots, a_s)$, so
\begin{equation}\label{trivialup}
\mathcal N_s(\lambda; a_1,\dots, a_s)<Vol(\Omega(\lambda+a_1+\cdots+a_s; a_1,\dots, a_s))= \frac {(\lambda+a_1+\cdots+a_s)^s}{s! a_1\cdots a_s}.
\end{equation}
Therefore,  the function $\mathcal N_s$ can be bound lower and upper by two polynomials of degree $s$ in the variable $\lambda$  for all $\lambda>0$ and asymptotically we have   $\mathcal N_s(\lambda; a_1,\dots, a_s)= \frac {\lambda^s}{s! a_1\cdots a_s}+ O( \lambda^{s-1})$.
We note that, in \cite{Leh} Lehmer determines  two other polynomials $P_{a_1,\dots,a_s}(\lambda)$  and $Q_{a_1,\dots,a_s}(\lambda)$ such that  bound more efficiently the function $\mathcal N_s$, %(\lambda; a_1,\dots, a_s)$
%$$P_{a_1,\dots,a_s}(\lambda)< \mathcal N_s(\lambda; a_1,\dots, a_s)<Q_{a_1,\dots,a_s}(\lambda),$$
 for all $\lambda>0$. Analogous results were found by  Lochs \cite{ Loch2},  that we can summarize, in a simplified form, in the following theorem 
\begin{theorem}[{\cite{Leh}} and {\cite{Loch2}}]\label{Leh_theo} Let $a_1, \dots,a_s$ be a real number.  Then
\begin{equation}\frac{ {\lambda^s}+\frac s2(a_2+\cdots+a_s)\lambda^{s-1}}{s! a_1\cdots a_s}<\mathcal N_s(\lambda; a_1,\dots, a_s)<\frac{( \lambda+\frac 12(a_1+\cdots +a_s))^s}{s! a_1\cdots a_s}, \text{ for all $\lambda>0$}.\end{equation}
\end{theorem}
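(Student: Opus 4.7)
\emph{Proof plan.} The plan is to prove the theorem by induction on the dimension $s$. The base case $s=1$ gives $\mathcal N_1(\lambda;a_1)=\lfloor \lambda/a_1\rfloor+1$, so the lower bound reduces to $\lambda/a_1<\lfloor \lambda/a_1\rfloor+1$, which is immediate (the sum $a_2+\cdots+a_s$ is empty), and the upper bound reduces to $\lfloor \lambda/a_1\rfloor+1<\lambda/a_1+1/2$ in the original Lehmer/Lochs form.

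For the inductive step, I would slice along the first coordinate:
\[ \mathcal N_s(\lambda;a_1,\ldots,a_s)=\sum_{k=0}^{\lfloor \lambda/a_1\rfloor} \mathcal N_{s-1}(\lambda-a_1 k;a_2,\ldots,a_s), \]
and substitute the inductive bounds into each summand. Setting $f(x):=(\lambda-a_1 x+(a_2+\cdots+a_s)/2)^{s-1}$ for the upper estimate, $f$ is convex and positive on the relevant range, so the midpoint rule yields $\sum_{k=0}^K f(k)\leq \int_{-1/2}^{K+1/2} f(x)\,dx$ with $K=\lfloor\lambda/a_1\rfloor$. Integrating explicitly via the substitution $u=\lambda-a_1 x+(a_2+\cdots+a_s)/2$ and checking that the boundary contribution at $x=K+1/2$ is non-negative (so that dropping it only strengthens the bound) gives exactly $(\lambda+(a_1+\cdots+a_s)/2)^s/(s!\,a_1\cdots a_s)$, the new $a_1/2$-piece arising from the half-step extension in $x$ combining with the inductive $\beta$. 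For the lower bound I would do the analogue: substitute the inductive lower bound and use the right-Riemann comparison $\sum_{k=0}^K f(k)\geq \int_0^{K+1} f(x)\,dx$, valid for decreasing $f$; the asymmetric coefficient $(a_2+\cdots+a_s)$ (rather than $(a_1+\cdots+a_s)$) appears precisely because the lower estimate does not admit a half-step extension in the $a_1$-direction.

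The hardest part will be the bookkeeping of constants: one must verify that the inductive coefficient $\frac{s-1}{2}(a_3+\cdots+a_s)$ together with the new $a_2/2$-contribution from the sum-to-integral comparison reassemble into exactly the $\frac{s}{2}(a_2+\cdots+a_s)$ coefficient claimed at level $s$ once the binomial expansion of $(\lambda+(a_1+\cdots+a_s)/2)^s$ is carried out to second order and lower-order remainders are controlled. A more conceptual shortcut is available by invoking Euler--Maclaurin summation directly on the $s$-dimensional lattice sum, but the inductive route keeps all estimates elementary, in the spirit of the rest of the paper, and is the form in which the original references of Lehmer and Lochs present the result.
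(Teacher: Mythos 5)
Your induction cannot get off the ground, because the base case you call ``immediate'' is false, and in fact the upper bound of the statement itself fails in the generality in which it is quoted. For $s=1$ the claimed inequality is $\lfloor \lambda/a_1\rfloor+1<\lambda/a_1+\tfrac12$, which fails whenever the fractional part of $\lambda/a_1$ is at most $\tfrac12$; for instance $\lambda=a_1$ gives $\mathcal N_1=2\not<\tfrac32$. This is not a repairable technicality of bookkeeping: the right-hand inequality, asserted ``for all $\lambda>0$'', is simply not true as written. Already for $s=2$, $a_1=1$, $a_2=\sqrt2$, $\lambda=1.1$ one counts $\mathcal N_2=2$, while $(\lambda+\tfrac12(a_1+a_2))^2/(2a_1a_2)\approx 1.88$; and if the $a_i$ are rationally dependent the bound fails even for arbitrarily large $\lambda$: with $a_1=a_2=1$ and $\lambda=n\in\N$ the count is $\binom{n+2}{2}=\tfrac{(n+1)(n+2)}2>\tfrac{(n+1)^2}2$. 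Spencer's asymptotic (Theorem \ref{Spencer}) explains why there is no slack: the upper bound agrees with the true count exactly to second order, so the inequality is borderline and sensitive both to small $\lambda$ and to rational dependence among the $a_i$. Consequently no amount of care in the inductive step can prove the displayed statement; Lehmer's and Lochs' actual theorems carry hypotheses and correction terms that the ``simplified form'' quoted here suppresses (note the paper itself gives no proof, it only cites them), and an honest write-up must either reproduce their precise statements or restrict the range of $\lambda$ and the admissible $a_i$ before attempting your argument.

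Even setting this aside, two steps of your sketch do not work as described. In the upper-bound step, after the midpoint-rule comparison you drop the boundary term of $\int_{-1/2}^{K+1/2}f$ at $x=K+\tfrac12$; its sign is that of $\lambda-a_1(K+\tfrac12)+\tfrac12(a_2+\cdots+a_s)$, which can be negative when $a_1>a_2+\cdots+a_s$, and once the linear base changes sign the convexity of $u\mapsto u^{s-1}$ on the last half-interval is also in question for the relevant parities; you would need to slice along a smallest $a_j$ or treat the endpoint separately. In the lower-bound step, the pure right-Riemann comparison $\sum_{k=0}^K g(k)\ge\int_0^{K+1}g$ applied to the inductive minorant only produces the numerator $\lambda^s+\tfrac s2(a_3+\cdots+a_s)\lambda^{s-1}$ after integrating out the slicing variable: the missing $\tfrac s2 a_2\lambda^{s-1}$ does not come from the mechanism you invoke (``no half-step extension in the $a_1$-direction'') but would have to come from the trapezoidal gain $\tfrac12 g(0)$ of the sum over the integral, and then the variable omitted from the correction term is the one distinguished by the inductive hypothesis rather than the slicing variable, an accounting mismatch you could only fix by appealing to the symmetry of $\mathcal N_s$ and controlling the leftover endpoint errors, which your plan does not address.
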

On other hand, in \cite{Spen} Spencer  found a asymptotic formula for the function $\mathcal N_s$.%(\lambda; a_1,\dots, a_s)$.
\begin{theorem}[{\cite[Theorem I]{Spen}}]\label{Spencer} Let $a_1, \dots,a_s$ be  real numbers, that are linearly independent over $\Q$.  Then
$$\mathcal N_s(x; a_1,\dots,a_s)= \frac {x^s}{s! a_1\cdots a_s} + \frac 1{2 (s-1)!} \frac {a_1+\cdots +a_s}{a_1\cdots a_s} x^{s-1}+ o(x^{s-1}/\log x).$$
\end{theorem}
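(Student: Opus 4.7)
The plan is to use a Perron-type contour integral applied to the multivariate geometric generating function
\begin{equation*}
F(w) = \sum_{\vec n \in \N_0^s} e^{-w(a_1 n_1 + \cdots + a_s n_s)} = \prod_{j=1}^s \frac{1}{1 - e^{-wa_j}},
\end{equation*}
which converges for $\operatorname{Re} w > 0$. By Perron's formula in its Laplace–Stieltjes version, for a fixed abscissa $c > 0$,
\begin{equation*}
\mathcal N_s(x; a_1, \dots, a_s) = \frac{1}{2\pi i}\int_{c-i\infty}^{c+i\infty} F(w)\,\frac{e^{wx}}{w}\,dw.
\end{equation*}
First I would justify this representation, then deform the contour to the left in order to pick up residues at the poles of $F(w)/w$.

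The dominant contribution comes from the pole at $w = 0$. Using the classical expansion
\begin{equation*}
\frac{1}{1 - e^{-wa_j}} = \frac{1}{wa_j} + \frac{1}{2} + \frac{wa_j}{12} + O(w^3),
\end{equation*}
multiplying the $s$ factors together, and then multiplying by $e^{wx}/w$, I would read off the residue as the coefficient of $w^s$ in the resulting Laurent series divided by $a_1\cdots a_s$. A short bookkeeping gives
\begin{equation*}
\operatorname*{Res}_{w=0} F(w)\frac{e^{wx}}{w} = \frac{x^s}{s!\,a_1\cdots a_s} + \frac{a_1 + \cdots + a_s}{2(s-1)!\,a_1 \cdots a_s}\,x^{s-1} + O(x^{s-2}),
\end{equation*}
which matches precisely the two leading terms of the desired asymptotic.

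The remaining poles of $F(w)$ sit at $w = 2\pi i k/a_j$ with $k \in \Z \setminus \{0\}$ and $1 \le j \le s$. The hypothesis that $a_1, \dots, a_s$ are linearly independent over $\Q$ guarantees that these poles are simple and all distinct, so the sum of their residues defines an oscillating remainder $E(x)$, and the whole argument reduces to proving $E(x) = o(x^{s-1}/\log x)$.

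The main obstacle is establishing this $\log x$ saving: a brutal bound on the sum of the absolute values of the residues yields only $O(x^{s-1})$. The intended strategy is to truncate the contour at height $|\operatorname{Im} w| = H(x)$ with $H(x) \to \infty$, choose $H$ so the tail integral contributes $o(x^{s-1}/\log x)$, and then show that the finite sum of oscillating residues inside the truncated range cancels thanks to Weyl-type equidistribution of the fractional parts $\{kx/a_j\}$. Because linear independence over $\Q$ supplies only qualitative irrationality information (no Diophantine type), no explicit power saving is available, and one is forced into the $o(\cdot)$ bound that appears in the statement; this is precisely where the hypothesis, and the intrinsic limitation of the error term, enter the argument.
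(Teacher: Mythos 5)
The statement you are asked to prove is quoted in the paper from Spencer's 1942 article and is not proved there at all (the authors point to Beukers for an elementary proof, which proceeds by induction on the dimension, reducing the count to lattice points on the faces of the tetrahedron and ultimately to the equidistribution of $\{n\alpha\}$ for irrational $\alpha$). Your residue computation at $w=0$ is correct and does recover the two main terms, but the proposal has a genuine gap exactly where the theorem lives: the error term. First, your claim that a ``brutal bound on the sum of the absolute values of the residues yields only $O(x^{s-1})$'' is not true. The residue at $w=2\pi i k/a_j$ carries a factor $\prod_{i\neq j}\bigl(1-e^{-2\pi i k a_i/a_j}\bigr)^{-1}$, and since the only hypothesis is linear independence over $\Q$ (no Diophantine type), these denominators can be arbitrarily small; the sum of absolute values of residues need not converge, and even a truncated sum admits no uniform $O(x^{s-1})$ bound without quantitative information on the ratios $a_i/a_j$. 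This small-denominator phenomenon is precisely why the problem (already for $s=2$, in Hardy--Littlewood's work on lattice points in right triangles) is delicate, and it also obstructs the contour shift itself: all the nonzero poles lie on the imaginary axis, so the deformation and the control of the truncated Perron integral both require bounds on $F$ near those poles that you do not have.

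Second, the step that is supposed to rescue the argument --- ``the finite sum of oscillating residues cancels thanks to Weyl-type equidistribution of the fractional parts $\{kx/a_j\}$'' --- is not an argument but a restatement of the difficulty. You would need to specify which quantity equidistributes (in the known proofs it is sequences of the form $\{n\,a_i/a_j\}$ arising from counting along faces, not $\{kx/a_j\}$ in the continuous variable $x$), how equidistribution, which is a statement about averages over $n$, produces cancellation in a sum over residues at a \emph{fixed} $x$, how the truncation height $H(x)$ is chosen compatibly with the small denominators, and where the extra factor $1/\log x$ in $o(x^{s-1}/\log x)$ comes from --- this logarithmic saving is strictly harder than $o(x^{s-1})$ and is the main technical content of Spencer's theorem. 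As it stands, the proposal establishes only the shape of the main terms, which already follow from the elementary volume estimates (\ref{trivialdown})--(\ref{trivialup}) combined with Theorem \ref{Leh_theo}, and leaves the actual assertion unproved. If you want a workable route, follow Beukers: slice the tetrahedron, count lattice points on the boundary faces by induction on $s$, and convert the resulting sawtooth sums into equidistribution statements for $\{n\,a_i/a_j\}$; that is where the hypothesis of linear independence over $\Q$ genuinely enters.
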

An elementary proof of this  result can be found in  \cite{Beu}.

\section{Irreducible Binomials and Associate Lattice}
%Through this section, $q$ is a power of a prime such %that $q\not\equiv 3\pmod 4$ and
 
For each  positive integer $T$, let  denote by $\Upsilon(T)$ the set of lattice points 
$$\Omega(\log T;l_1,\dots, l_s)\cap \N_0^s, \text{ where $l_j=\log p_j$ for $j=1,\dots,s$}.$$ 
It is clear, from the definition of $\Upsilon(T)$  that 
$$\vec v:=(v_1,\dots, v_s)\in \Upsilon(T)\text{ if and only if } t(\vec v)=p_1^{v_1}\cdots p_s^{v_s}\le T.$$
We also denote by $\Upsilon^+(T)=\Upsilon(T)\cap \N^s$, i.e.  the elements of $\Upsilon(T)$ with every coordinates are positive, and for each $j=1,\dots,s$,  $\Upsilon_j(T)$  denotes the subset of $\Upsilon(T)$ with the $j$-th coordinates is equal to zero and the other coordinates are positive. By definition, the sets 
$$\Upsilon^+(T), \Upsilon_1(T),\dots, \Upsilon_s(T)$$
 are disjoint in pairs.  Finally we denote by $\Upsilon_0(T)$ the complementary subset
 $$\Upsilon(T)\setminus (\Upsilon^+(T)\cup\Upsilon_1(T)\cup\dots\cup \Upsilon_s(T)),$$
i.e. the subset of elements with two or more coordinates equal to zero.
 
In order to proof the main result of this article, we need the following lemma, that will be use to  estimate  the number of irreducible binomial   of degree $t\le T$, where $\frac {\rad(q-1)}{\rad(t)}$ is $1$ or a prime number.   The essential idea  is that the number of binomials of this type  is asymptotically bigger that the other type of binomials of degree less that or equal to $T$.

 \begin{lemma}\label{lem3.1}Let $T> \rad(q-1)$ be an integer. % greater than or equal to $\rad(q-1)$. 
 Then
 \begin{enumerate}[\bf(a)]
 \item  
 \begin{align*}\sum_{\vec v\in \Upsilon^+(T)} \frac {\varphi (t(\vec v))}{t(\vec v)}&=\frac{\varphi(q-1)}{q-1} \mathcal N_s\left(\log \left(\frac T{\rad(q-1)}\right); l_1,\dots, l_s\right)\\
 &= \frac{\varphi(q-1)}{(q-1)s! l_1\cdots l_s} (\log T)^s\left(1-  \frac{s \log(\rad(q-1))}{2\log T}\right)+o\left(\frac{(\log T)^{s-1}}{\log\log T}\right)
 \end{align*}
  
 \item  
  \begin{align*}\sum_{j=1}^s\sum_{\vec v\in \Upsilon_j(T)} \frac {\varphi (t(\vec v))}{t(\vec v)}&=\frac{\varphi(q-1)}{q-1}\sum_{j=1}^s \frac {p_j}{p_j-1} \mathcal N_{s-1}\left(\log \left(\frac {p_jT}{\rad(q-1)}\right); l_1,\dots,
\widehat {l_j},
%\widehat{l_j},
\cdots,l_s\right)  \\
  &=\frac{\varphi(q-1)}{(q-1)(s-1)! l_1\cdots l_s} (\log T)^{s-1}\sum_{j=1}^s \frac {p_j l_j}{p_j-1}  +O( (\log T)^{s-2})
  \end{align*}
where $\widehat {l_j}$ means that $l_j$ does not appear as parameter in the function.
 \end{enumerate}
 \end{lemma}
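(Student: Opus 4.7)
The plan is to reduce both sums to lattice-point counts in $s$- and $(s-1)$-dimensional tetrahedra, and then feed them into Theorem~\ref{Spencer}. For part (a), I would first observe that every $\vec v\in\Upsilon^+(T)$ has strictly positive coordinates, so $\rad(t(\vec v))=p_1p_2\cdots p_s$ and the ratio
\[
\frac{\varphi(t(\vec v))}{t(\vec v)}=\prod_{j=1}^s\Bigl(1-\frac{1}{p_j}\Bigr)
\]
is constant on $\Upsilon^+(T)$; when $q\not\equiv 3\pmod{4}$ this constant equals $\varphi(q-1)/(q-1)$. The sum thus collapses to that constant times $|\Upsilon^+(T)|$. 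The substitution $u_j=v_j-1$ puts $\Upsilon^+(T)$ in bijection with $\{\vec u\in\N_0^s:\sum_j u_j l_j\le \log T-\sum_j l_j\}$, and since $\sum_j l_j=\log\rad(q-1)$ this identifies $|\Upsilon^+(T)|$ with $\mathcal N_s(\log(T/\rad(q-1));l_1,\ldots,l_s)$, which is the first equality of (a).

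For (b), on $\Upsilon_j(T)$ we have $v_j=0$ and $v_i\ge 1$ for $i\ne j$, so
\[
\frac{\varphi(t(\vec v))}{t(\vec v)}=\prod_{i\ne j}\Bigl(1-\frac{1}{p_i}\Bigr)=\frac{p_j}{p_j-1}\cdot\frac{\varphi(q-1)}{q-1}.
\]
Applying the same shift $u_i=v_i-1$ to the remaining $s-1$ indices, and using $\sum_{i\ne j}l_i=\log(\rad(q-1)/p_j)$, identifies $|\Upsilon_j(T)|$ with $\mathcal N_{s-1}(\log(p_jT/\rad(q-1));l_1,\ldots,\widehat{l_j},\ldots,l_s)$. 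Summing over $j$ yields the first equality of (b).

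For the asymptotic half of each formula I would apply Spencer's theorem, which is legitimate because $l_1=\log p_1,\ldots,l_s=\log p_s$ are $\Q$-linearly independent. In (a), setting $\lambda=\log T-\log\rad(q-1)$ and expanding $\lambda^s$, $\lambda^{s-1}$ by the binomial theorem, the Spencer correction $\frac{s(l_1+\cdots+l_s)}{2\,s!\,l_1\cdots l_s}(\log T)^{s-1}$ adds to the cross term $-\frac{s\log\rad(q-1)}{s!\,l_1\cdots l_s}(\log T)^{s-1}$ coming from $\lambda^s$; using $l_1+\cdots+l_s=\log\rad(q-1)$ the two combine into exactly $-\frac{s\log\rad(q-1)}{2\,s!\,l_1\cdots l_s}(\log T)^{s-1}$, producing the bracketed factor $1-\frac{s\log\rad(q-1)}{2\log T}$. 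In (b) only the leading behaviour $\mathcal N_{s-1}(\lambda)=\frac{l_j\lambda^{s-1}}{(s-1)!\,l_1\cdots l_s}+O(\lambda^{s-2})$ is needed, which after summation over $j$ gives the factor $\sum_j\frac{p_jl_j}{p_j-1}$.

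The main obstacle is purely book-keeping: one has to verify that the $O((\log T)^{s-2})$ remainders from the binomial expansions are absorbed into Spencer's remainder $o(\lambda^{s-1}/\log\lambda)=o((\log T)^{s-1}/\log\log T)$ for part (a) and into $O((\log T)^{s-2})$ for part (b). The algebraic identity $\sum_j l_j=\log\rad(q-1)$ is the step that makes the coefficient of $(\log T)^{s-1}$ simplify so cleanly in the principal case; the hypothesis $T>\rad(q-1)$ is only used to guarantee $\lambda>0$ so that Spencer's asymptotic is meaningful.
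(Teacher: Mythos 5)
Your proposal is correct and follows essentially the same route as the paper: the ratio $\varphi(t(\vec v))/t(\vec v)$ is constant on $\Upsilon^+(T)$ and on each $\Upsilon_j(T)$, the shift $u_i=v_i-1$ identifies these sets with lattice points of lower tetrahedra, and Spencer's theorem with the binomial expansion of $\lambda=\log T-\log\rad(q-1)$ (using $l_1+\cdots+l_s=\log\rad(q-1)$) gives the stated asymptotics; your explicit remark on the $\Q$-linear independence of the $l_j$ and on the case $q\not\equiv 3\pmod 4$ only makes explicit what the paper leaves implicit.
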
  
 \begin{proof}
 {\bf(a)}  Since $\rad(t(\vec v))=\rad(q-1)$ for all $\vec v \in \Upsilon^+(T)$,  it follows that 
$$\frac{\varphi (t(\vec v))}{t(\vec v)}= \frac{\varphi (\rad(t(\vec v)))}{\rad(t(\vec v))}=\frac{\varphi(\rad(q-1))}{\rad(q-1)}=\frac {\varphi(q-1)}{q-1}.$$
Therefore
\begin{equation}\label{eq3.1}
\sum_{\vec v\in \Upsilon^+(T)} \frac {\varphi (t(\vec v))}{t(\vec v)}=\frac {\varphi(q-1)}{q-1} |\Upsilon^+(T)|.
\end{equation}
On other hand, we know that
$$(v_1,v_2,\dots,v_s)\in \Upsilon^+(T) \text{ if and only if } (v_1-1,v_2-1,\dots, v_s-1)\in \Upsilon\left(\frac {T}{p_1\cdots p_s}\right),$$
thus 
\begin{equation}\label{upsilon1} |\Upsilon^+(T)|= \left|\Upsilon\left(\frac {T}{p_1\cdots p_s}\right)\right|=\mathcal N_s\left(\frac{T}{p_1\cdots p_s}; l_1,\dots,l_s\right).
\end{equation}
%From here, the result follows from Spencer's Theorem (Theorem \ref{Spencer}). 
Finally, from Theorem \ref{Spencer} and using the fact that 
$$ \left(\log\left(\frac T{p_1\cdots p_s}\right)\right)^{k}=(\log T)^k-k\log(p_1\cdots p_s)(\log T)^{k-1} +O((\log T)^{k-2})$$ for all $k\ge 1$,  we have that
\begin{align*}
 |\Upsilon^+(T)|&=
\frac 1{s! l_1\cdots l_s} \left( \left(\log\left(\frac T{p_1\cdots p_s}\right)\right)^s+\frac s2(l_1+\cdots+l_s)  \left(\log\left(\frac T{p_1\cdots p_s}\right)\right)^{s-1}   \right)+ o\left( \frac{(\log T)^{s-1}}{\log\log T}\right)\\
&=
\frac 1{s! l_1\cdots l_s} \left( 
(\log T)^s-s\log(p_1\cdots p_s)(\log T)^{s-1}+\frac s2\log(p_1\cdots p_s) (\log T)^{s-1}\right) +o\left( \frac{(\log T)^{s-1}}{\log\log T}\right)\\
 &= \frac{(\log T)^s}{s! l_1\cdots l_s} \left(1-  \frac{s \log(\rad(q-1))}{2\log T}\right)+o\left(\frac{(\log T)^{s-1}}{\log\log T}\right).
\end{align*}
The result follows from Equation \ref{eq3.1} and this last identity.

{\bf (b)} If $\vec v\in \Upsilon_j(T)$, then $\rad(t(\vec v))=\frac {\rad(q-1)}{p_j}$ and 
$$\frac{\varphi (t(\vec v))}{t(\vec v)}=\frac{\varphi\left(\frac{\rad(q-1)}{p_j}\right)}{\frac{\rad(q-1)}{p_j}}=\frac {\varphi(\rad(q-1))}{\rad(q-1)}\cdot\frac {p_j}{p_j-1}=\frac {\varphi(q-1)}{q-1}\cdot\frac {p_j}{p_j-1}.$$
Therefore
\begin{equation}\label{eq3.2}\sum_{j=1}^s\sum_{\vec v\in \Upsilon_j(T)} \frac {\varphi (t(\vec v))}{t(\vec v)}=\frac{\varphi(q-1)}{q-1}\sum_{j=1}^s \frac {p_j}{p_j-1}|\Upsilon_j(T)|.
\end{equation}
Using the fact that every point in $\Upsilon_j(T)$ has the $j$-th coordinate equal to zero, we can eliminate that coordinate and following the same reasoning as  before, we have that 	
\begin{align} %|\Upsilon_j(T)|&=\mathcal N_{s-1}\left(\frac{T}{p_1\cdots p_{j-1}p_{j+1}\cdots p_s}; l_1,\dots,l_{j-1},l_{j+1},\dots, l_s\right)\nonumber\\
|\Upsilon_j(T)|&=\mathcal N_{s-1}\left(\frac{T}{p_1\cdots p_{j-1}p_{j+1}\cdots p_s}; l_1,\dots,\widehat{l_{j}},\dots, l_s\right)\nonumber\\
&=\frac 1{(s-1)! l_1\cdots l_{j-1}l_{j+1}\cdots l_s}(\log T)^{s-1}+ O((\log T)^{s-2}).\label{eq3.3}
\end{align} 
The result  follows from Equations \ref{eq3.2}  and \ref{eq3.3}.
 
 \end{proof}
 \section{Proof of Theorem \ref{principal}}
Firstly, let us consider the case when $q\not \equiv 3 \pmod 4$, and then   $\rad_4(q-1)=\rad(q-1)$. Since $N_q(t)=0$ if $\rad_4(t)\nmid (q-1)$, we have from Lemma \ref{Br} that
\begin{equation}
\sum_{t\le T}N_q(t)=\sum_{{t\le T\atop \rad(t)|(q-1)}} N_q(t)=(q-1)\!\sum_{{t\le T\atop \rad(t)|(q-1)}} \frac{\varphi(t)}{t}
\end{equation}
Observe that the conditions $t\le T$ and $\rad(t)\mid(q-1)$ is equivalent to $t=p_1^{v_1}\cdots p_s^{v_s}\le T$ where each $v_j$ is  a non negative integer. This last inequality is equivalent to the linear inequality  $v_1\log p_1+\cdots +v_s\log p_s\le \log T$, i.e. $\vec v=(v_1,\dots,v_s)\in\Upsilon(T)= \Omega(\log T; l_1,\dots, l_s)\cap \N_0^s$%, where $l_j=\log p_j$
. Therefore
$$\sum_{t\le T}N_q(t)=
(q-1)\!\sum_{\vec v\in \Upsilon(T)}\frac{\varphi(t(\vec v))}{t(\vec v)}
= (q-1)( A+B+C),$$
where 
\begin{equation}\label{ABC}
A:= \sum_{\vec v\in \Upsilon^+(T)}\frac{\varphi(t(\vec v))}{t(\vec v)},\qquad 
B:=\sum_{j=1}^s\sum_{\vec v\in \Upsilon_j(T)}\frac{\varphi(t(\vec v))}{t(\vec v)}\quad
\text{ and }\quad C:=\sum_{\vec v\in \Upsilon_0(T)}\frac{\varphi(t(\vec v))}{t(\vec v)}. \end{equation}
The summations $A$ and $B$  correspond to items (a) and (b) of Lemma \ref{lem3.1}.  Thus
\begin{align*}
(q-1)(A+B)&=\frac{\varphi(q-1)}{s! l_1\cdots l_s}\left( (\log T)^s-  \frac{s}2\sum_{j=1}^s l_j(\log T)^{s-1}
+s \sum_{j=1}^s \frac {p_j l_j}{p_j-1}(\log T)^{s-1}
\right)+o\left(\frac{(\log T)^{s-1}}{\log\log T}\right)\\
&=\frac{\varphi(q-1)}{s! l_1\cdots l_s}\left( (\log T)^s+  \frac{s}2\sum_{j=1}^s\frac {p_j+1}{p_j-1} l_j(\log T)^{s-1}
\right)+o\left(\frac{(\log T)^{s-1}}{\log\log T}\right)
\end{align*}
The last summation can be bound as
\begin{equation}\label{faltamdois}
\sum_{\vec v\in \Upsilon_0(T)}\frac{\varphi(t(\vec v))}{t(\vec v)}\le \sum_{\vec v\in \Upsilon_0(T)} 1\le
\sum_{1\le i<j\le s}|\Upsilon_{ij}(T)|,
\end{equation}
where 
\begin{equation}\label{faltamdoisII}
\Upsilon_{ij}(T):=  \sum_{1\le i<j\le s} |\{\vec v\in \Upsilon_0(T); v_i=0 \text{ and } v_j=0\}|.
\end{equation}
Since $\Upsilon_{ij}(T)=O((\log T)^{s-2})$, we conclude that the last term is asymptotically small compared to the firsts two summations.

Finally, the proof of the case when $q\equiv 3\mod 4$ is essentially the same, using the fact that
$$\sum_{t\le T}N_q(t)=%\sum_{t\le T\atop \text{$t$ odd}}(N_q(t)+N_q(2t))= 
\sum_{t\le T\atop \rad_4(t)|(q-1)}N_q(t)=
\sum_{t\le T\atop \rad(t)|\frac {q-1}2}N_q(t)+\sum_{t\le T/2\atop \rad(t)|\frac {q-1}2}N_q(2t),$$
and these two summations are similar to the before case.  
\qed

\section{Bound the number of irreducible binomials for  $T$ small}
The purpose of this section is to show some lower and upper bounds for the number of monic irreducible binomials where $T$ is not necessarily a big number.  It is clear that these bound can be improved for  more complicated functions, but for clarity we will no try to do that.  

Let suppose that $q\not\equiv 3\pmod 4$ and $s\ge 2$.  We can obtain a trivial lower and upper bound using Theorem \ref{Leh_theo} %\ref{trivialdown} and \ref{trivialup} \label{faltamdois}
and observing that the inequalities $1\ge \frac{\varphi(t)}{t}\ge \frac{\varphi(q-1)}{q-1}$ are true for any $t$ such that $\rad(t)|(q-1)$. Thus,  for any $T>1$  we have that
$$\sum_{t\le T}N_q(t)= (q-1)\!\!\sum_{t\le T \atop \rad(t)|(q-1)} \frac{\varphi(t)}t\ge \varphi(q-1)|\Upsilon(T)|> \frac {\varphi(q-1)}{s! \log p_1\cdots \log p_s}(\log T)^s\left(1+ \frac {s\log(\rad(q-1)/p_1)}{2\log T}\right)$$
and 
$$\sum_{t\le T}N_q(t)= (q-1)\!\!\sum_{t\le T \atop \rad(t)|(q-1)} \frac{\varphi(t)}t\le (q-1)|\Upsilon(T)|< \frac {q-1}{s! \log p_1\cdots \log p_s}(\log T)^s\left(1+\frac{\log(\rad(q-1))}{2\log T}\right)^s.$$
In the following theorem, we improved the upper bound, which is very weak compared with the asymptotic result proven in the previous section. 

\begin{theorem}For any $T> \rad(q-1)$, where  $q\ne 3 \pmod 4$,
% \frac s2 \rad(q-1)\log (\rad(q-1))$, 
the number of monic irreducible binomial in $\F_q[x]$ of degree less or equal to $T$ is upper bounded by
$$\frac {\varphi (q-1)}{s!\log p_1\cdots \log p_s} (\log T)^s\left( 1+sM_1\frac {\log(\rad(q-1))}{\log T} + s(s-1)M_2\left(\frac {\log(\rad(q-1))}{\log T}\right)^2\right),$$
where 
$$M_1:=(\rad(q-1))^{-(s-1)/(2\log T)}\left(1+\frac {\log 2s}s\right)- \frac 12$$ 
and 
$$M_2:=%\frac{q-1}{\varphi(q-1)} (\rad(q-1))^{(s-1)/(2\log T)} \sum_{1\le i<j\le s} \log p_i\log p_j+ \frac 18 (\log(\rad(q-1))^2.$$
\frac{(q-1)(s-1)}{2s\varphi(q-1)} (\rad(q-1))^{(s-2)/(2\log T)} + \frac 18.$$

\end{theorem}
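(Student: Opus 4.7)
The plan is to refine the decomposition $\sum_{t\le T}N_q(t)=(q-1)(A+B+C)$ from the proof of Theorem~\ref{principal}, applying to each of $A$, $B$, and $C$ the \emph{explicit} Lehmer--Lochs upper bound of Theorem~\ref{Leh_theo} in place of the asymptotic Spencer formula, and then organizing the result as a polynomial of degree two in $\delta:=\log R/\log T\in(0,1)$ (with $R=\rad(q-1)$), multiplied by the common factor $\tfrac{\varphi(q-1)(\log T)^s}{s!\,l_1\cdots l_s}$.

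For the ``positive-coordinates'' piece $A$, Lemma~\ref{lem3.1}(a) rewrites it as $\tfrac{\varphi(q-1)}{q-1}\mathcal N_s(\log(T/R);l_1,\dots,l_s)$, and Theorem~\ref{Leh_theo} yields
$(q-1)A\le \tfrac{\varphi(q-1)(\log T-\frac{1}{2}\log R)^s}{s!\,l_1\cdots l_s}=\tfrac{\varphi(q-1)(\log T)^s}{s!\,l_1\cdots l_s}\bigl(1-\tfrac{\delta}{2}\bigr)^s$.
Combined with the elementary inequality $(1-x)^s\le 1-sx+\binom{s}{2}x^2$ valid for $0\le x\le 1$ and $s\ge 2$ (provable by noting that the difference has value and derivative $0$ at $x=0$ and non-negative second derivative $s(s-1)(1-(1-x)^{s-2})$), this already accounts for the $-\tfrac12$ inside $M_1$ and the $+\tfrac18$ inside $M_2$.

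For the ``two-or-more-zeros'' piece $C$, the crude estimate $\varphi(t(\vec v))/t(\vec v)\le 1$ together with $|\Upsilon_0(T)|\le\sum_{1\le i<j\le s}|\Upsilon_{ij}(T)|$ reduces matters to bounding each $|\Upsilon_{ij}(T)|\le\mathcal N_{s-2}(\log T;\{l_k\}_{k\ne i,j})$ by Theorem~\ref{Leh_theo}. Applying $(1+x)^{s-2}\le e^{(s-2)x}$ converts the Lehmer--Lochs numerator into $(\log T)^{s-2}R^{(s-2)/(2\log T)}(p_ip_j)^{-(s-2)/(2\log T)}$, and the last factor is bounded by $1$. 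Finally, the power-mean inequality $\sum_i l_i^2\ge\tfrac1s(\sum_i l_i)^2$ yields $\sum_{i<j}l_il_j\le\tfrac{s-1}{2s}(\log R)^2$, which produces precisely the term $\tfrac{(q-1)(s-1)}{2s\,\varphi(q-1)}R^{(s-2)/(2\log T)}$ inside $M_2$.

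The main obstacle is the middle piece $B$. Lemma~\ref{lem3.1}(b) combined with Theorem~\ref{Leh_theo} gives
$(q-1)B\le\tfrac{\varphi(q-1)}{(s-1)!\,l_1\cdots l_s}\sum_{j=1}^{s}\tfrac{p_jl_j}{p_j-1}\bigl(\log T-\tfrac{\log R-l_j}{2}\bigr)^{s-1}.$
Applying $(1-x)^{s-1}\le e^{-(s-1)x}$ extracts the factor $R^{-(s-1)/(2\log T)}$ at the cost of introducing weights $p_j^{(s-1)/(2\log T)}$, and the remaining elementary inequality
$\sum_{j=1}^{s}\tfrac{p_jl_j}{p_j-1}p_j^{(s-1)/(2\log T)}\le\bigl(1+\tfrac{\log 2s}{s}\bigr)\log R$
then yields the term $R^{-(s-1)/(2\log T)}\bigl(1+\tfrac{\log 2s}{s}\bigr)$ inside $M_1$. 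This last inequality is the delicate step: the $\log 2s$ arises from a weighted estimate on $\sum_j \tfrac{l_j}{p_j-1}$ over distinct primes (exploiting $p_j\ge j+1$ together with an Abel-summation argument), while the weights $p_j^{(s-1)/(2\log T)}$ stay under control because $T>R\ge 2^s$ forces the exponent $(s-1)/(2\log T)$ to be uniformly bounded. Once this inequality is verified, collecting the three contributions and refactoring $\tfrac{\varphi(q-1)(\log T)^s}{s!\,l_1\cdots l_s}$ delivers the stated upper bound.
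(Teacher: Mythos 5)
Your decomposition into $(q-1)(A+B+C)$ and your treatment of $A$ and $C$ are exactly the paper's: the same Lehmer--Lochs bound from Theorem \ref{Leh_theo}, the same expansion $(1-x)^s\le 1-sx+\binom{s}{2}x^2$ producing the $-\tfrac12$ in $M_1$ and the $\tfrac18$ in $M_2$, and the same estimate $\sum_{i<j}l_il_j\le\tfrac{s-1}{2s}(\log \rad(q-1))^2$ for the two-zero part. The gap is in $B$. The ``remaining elementary inequality'' you invoke, $\sum_{j=1}^s\tfrac{p_jl_j}{p_j-1}\,p_j^{(s-1)/(2\log T)}\le\bigl(1+\tfrac{\log 2s}{s}\bigr)\log \rad(q-1)$, is precisely what is needed to produce $M_1$, but it is false in general. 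Boundedness of the exponent $(s-1)/(2\log T)$ does not control the weight $p_j^{(s-1)/(2\log T)}=e^{(s-1)l_j/(2\log T)}$: when a single prime carries most of $\log\rad(q-1)$ and $T$ is close to $\rad(q-1)$, that weight is close to $e^{(s-1)/2}$, which dwarfs the allowed factor $1+\tfrac{\log 2s}{s}$. A concrete admissible instance: $q=157$ (so $q-1=2^2\cdot3\cdot13$, $s=3$, $\rad(q-1)=78$, $q\equiv 1\pmod 4$) with $T=79$; the left-hand side is about $8.74$ while the right-hand side is $(1+\tfrac{\log 6}{3})\log 78\approx 6.96$. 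Asymptotically, with $p_1=2$, $p_2=3$, $p_3=P\to\infty$ and $T$ just above $\rad(q-1)$, the $j=3$ term alone is $\sim e\log P$ against a right-hand side $\sim 1.6\log P$. So this step fails, and with it your derivation of the $M_1$ term.

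For comparison, the paper never faces these weights: in its chain of inequalities for $B$ it replaces $\log(\rad(q-1)/p_j)$ by $\log(\rad(q-1))$ \emph{before} applying $1-x<e^{-x}$, and then bounds $\sum_j\tfrac{p_jl_j}{p_j-1}$ by $\tfrac{(s+\log 2s)\log\rad(q-1)}{s}$ via Chebyshev's sum inequality. Note, however, that this replacement strictly decreases the factor $\bigl(1-\tfrac{\log(\rad(q-1)/p_j)}{2\log T}\bigr)^{s-1}$ (the paper writes it as an equality), so it is not a legitimate upper-bound step either; your more careful version exposes the genuine difficulty, and the numerical example shows the constant $1+\tfrac{\log 2s}{s}$ cannot absorb the resulting weights when $T$ is near $\rad(q-1)$. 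As it stands, your proof of the $B$-estimate, and hence of the stated value of $M_1$, has a real gap: you would need either a different way to bound $\sum_j\tfrac{p_jl_j}{p_j-1}\bigl(1-\tfrac{\log(\rad(q-1)/p_j)}{2\log T}\bigr)^{s-1}$ or a weaker constant in $M_1$.
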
 

\begin{proof}
We know that the number of monic irreducible binomials is given by the formula $(q-1)(A+B+C)$, where $A$, $B$ and $C$ are defined by  Equations in \ref{ABC}. Therefore it is enough to find an upper bound to each of them.  From Equation \ref{upsilon1} and Theorem \ref{Leh_theo}, we have that
\begin{align}
A&=\frac {\varphi(q-1)}{q-1}\left|\Upsilon\left(\log\left(\frac T{\rad(q-1)}\right)\right)\right|\nonumber\\
&\le \frac {\varphi(q-1)}{q-1}\cdot \frac {\left( \log\left(\frac T{\rad(q-1)}\right)+\frac 12(l_1+\cdots+l_s)\right)^s}{s! l_1\cdots l_s}\nonumber\\
&= \frac {\varphi(q-1)}{(q-1)s! l_1\cdots l_s}(\log T)^s\left( 1-\frac {\log (\rad(q-1))}{2\log T}\right)^s,\nonumber\\
&\le \frac {\varphi(q-1)}{(q-1)s! l_1\cdots l_s}(\log T)^s\left( 1-s\frac {\log (\rad(q-1))}{2\log T}+
s(s-1)\frac {(\log (\rad(q-1)))^2}{8(\log T)^2}
\right). \label{eqA}
\end{align}
From Equation \ref{eq3.2}, it follows that
\begin{align}
B&=\frac{\varphi(q-1)}{q-1}\sum_{j=1}^s \frac {p_j}{p_j-1}|\Upsilon_j(T)|\nonumber\\
&\le \frac{\varphi(q-1)}{q-1}\sum_{j=1}^s \frac {p_j}{p_j-1}\frac {\left(\log \left(\frac{p_j T}{\rad(q-1)}\right) +\frac 12(l_1+\cdots+l_{j-1}+l_{j+1}+\cdots+l_s)\right)^{s-1}}{(s-1)! l_1\cdots l_{j-1}l_{j+1}\cdots l_s}\nonumber\\
&= \frac{\varphi(q-1)}{(q-1)s!l_1\cdots l_s}(\log T)^{s-1}\sum_{j=1}^s  \frac {sp_j\log p_j}{p_j-1}\left(1 -\frac{\log\left(\frac{\rad(q-1)}{p_j}\right)}{2\log T}\right)^{s-1}\nonumber\\
&= \frac{\varphi(q-1)}{(q-1)s!l_1\cdots l_s}(\log T)^{s-1}\sum_{j=1}^s  \frac {sp_j\log p_j}{p_j-1}\left(1 -\frac{\log\left(\rad(q-1)\right)}{2\log T}\right)^{s-1}\nonumber\\
&< \frac{\varphi(q-1)}{(q-1)s!l_1\cdots l_s}(\log T)^{s-1}\sum_{j=1}^s  \frac {sp_j\log p_j}{p_j-1} (\rad(q-1))^{-(s-1)/(2\log T)}\nonumber \\
&< \frac{\varphi(q-1)}{(q-1)s!l_1\cdots l_s}(\log T)^{s-1}
\left( s+\log 2s\right)
%\sum_{j=1}^s  \frac {sp_j\log p_j}{p_j-1} 
\log(\rad(q-1)) (\rad(q-1))^{-(s-1)/(2\log T)},
\label{eqB}
\end{align}
where in the last two  inequalities we use respectively that $(1+x)<e^x$ and Chebyshev's sum inequality. 
Finally, from Equations \ref{faltamdois} and \ref{faltamdoisII}, and using the same argument as before, we obtain that
\begin{align}
C&\le \sum_{1\le i<j\le s}|\Upsilon_{ij}(T)|\nonumber\\
&\le \sum_{1\le i<j\le s} \mathcal N_{s-2} (\log T; l_1,\dots, \widehat{l_i},\dots, \widehat{l_j},\dots,l_s)\nonumber\\
&\le\frac 1{(s-2)!l_1\dots l_s} (\log T)^{s-2}\sum_{1\le i<j\le s} l_il_j 
\left(1+\frac {\log \left(\frac{\rad(q-1)}{p_ip_j}\right)}{2\log T}\right)^{s-2}\nonumber\\
&< \frac 1{(s-2)!l_1\dots l_s} (\log T)^{s-2}\cdot \sum_{1\le i<j\le s} l_il_j \cdot
\left(1+\frac {\log \left(\rad(q-1)\right)}{2\log T}\right)^{s-2}\nonumber\\
 %&<\frac 1{(s-2)!l_1\dots l_s} (\log T)^{s-2}\sum_{1\le i<j\le s} l_il_j   (\rad(q-1))^{(s-2)/(2\log T)}.
&<\frac 1{(s-2)!l_1\dots l_s} (\log T)^{s-2}\cdot \frac {s-1}{2s} (\log\rad(q-1))^2 \cdot (\rad(q-1))^{(s-2)/(2\log T)},
 \label{eqC}
\end{align}
where in the last inequality we use the inequality $\sum\limits_{1\le i<j\le s} x_ix_j\le \frac {s-1}{2s} \Bigl(\sum\limits_{1\le i\le s} x_i\Bigr)^2$. From the inequalities \ref{eqA}, \ref{eqB} and \ref{eqC} it follows the result. 
\end{proof}

\end{document}